\newtheorem{theorem}{Theorem}
\newtheorem{proposition}[theorem]{Proposition}
\newtheorem{remark}[theorem]{Remark}
\newenvironment{proof}[1][Proof]{\textbf{#1.} }{\ \rule{0.5em}{0.5em}}
\begin{document}

\date{}
\title{Preconditioning linear systems via matrix function evaluation}
\author{P. Novati, M. Redivo-Zaglia\thanks{%
Corrisponding author. Email adresses: \texttt{novati@math.unipd.it} (P.
Novati), \texttt{Michela.RedivoZaglia@unipd.it} (M. Redivo-Zaglia), \texttt{%
mrrusso@math.unipd.it} (M.R. Russo)}, M.R. Russo \\
Department of Pure and Applied Mathematics\\
University of Padova, Italy }
\maketitle

\begin{abstract}
For the solution of discrete ill-posed problems, in this paper a novel
preconditioned iterative method based on the Arnoldi algorithm for matrix
functions is presented. The method is also extended to work in connection
with Tikhonov regularization. Numerical experiments arising from the
solution of integral equations and image restoration are presented.

\vspace{0.3cm} \noindent\textbf{Keywords:} Tikhonov regularization, Matrix
functions, Arnoldi method.
\end{abstract}

\section{Introduction}

In this paper we consider the solution of ill-conditioned linear systems%
\begin{equation*}
A{\mathbf{x}}={\mathbf{b}},
\end{equation*}%
in which we assume $A\in \mathbb{R}^{N\times N}$ to be full rank with
singular values that gradually decay to $0$. As reference problems we
consider the linear systems arising from the discretization of Fredholm
integral equation of the first kind (commonly referred to as discrete
ill-posed problems \cite{PCH}), where $A$ represents the discretization of a
compact operator. Most of the arguments here presented can also be applied
to certain saddle point problems (see e.g. \cite{BGL}) or even Vandermonde
type systems arising from interpolation theory (see e.g. \cite{Fas}). For
important applications, involving for instance Vandermonde type systems, ${%
\mathbf{b}}$ is assumed to be error-free. On the other hand, working with
discrete ill-posed problems one typically assumes the right-hand side ${%
\mathbf{b}}$ affected by noise. In this paper we consider both cases, taking
into account the two possible situations.

In this framework, it is well known that many Krylov type methods such as
the CG and the GMRES possess certain regularizing properties that allow to
consider them as effective alternative to the popular Tikhonov
regularization method, based on the minimization of the functional%
\begin{equation}
J({\mathbf{x}},\lambda )=\left\Vert A{\mathbf{x}}-{\mathbf{b}}\right\Vert
^{2}+\lambda \left\Vert H{\mathbf{x}}\right\Vert ^{2},  \label{mn}
\end{equation}%
($\left\Vert \cdot \right\Vert $ denoting the Euclidean vector norm) where $%
\lambda >0$ is a given parameter and $H$ is a regularization matrix (see
e.g. \cite{PCH} and \cite{HH} for a background). Indeed, since most of
Krylov methods working with $A$ or $A^{T}A$ initially pick up the largest
singular values of $A$, they can be interpreted as regularization methods in
which the regularization parameter is the iteration number $m$. We may refer
to the recent paper \cite{BSS} and the reference therein for an analysis of
the spectral approximation properties of the Arnoldi-based methods and again
\cite{PCH} \S 6 for the CG-like methods. Anyway, in the framework of
discrete ill-posed problems, Krylov subspace methods also present some
important drawbacks. First of all we may have semi-convergence, that is, the
method initially converges but rather rapidly diverges. This phenomenon
typically appears when the Krylov method is implemented with the
re-orthogonalization of the Krylov vectors (as for instance in the case of
the Matlab version of the GMRES, where the orthogonality of the Krylov basis
is guaranteed at the machine precision by the use of the Householder
transformations). In this situation, after approximating the larger singular
values (oversmoothing) the method is also able to provide a good
approximation to the smallest ones (undersmoothing). This allows to reach
the maximum accuracy, attained for a certain $m_{\mathrm{opt}}$, but at the
same time a reliable stopping criterium needs to be used to avoid
divergence. On the other side, if a Krylov method is implemented without
re-orthogonalization it is typically not able to produce good approximation
of the smallest singular values. After say $\overline{m}$ iterations
(normally with $\overline{m}<m_{\mathrm{opt}}$, hence in a situation of
oversmoothing) multiple or spurious approximations of the smallest singular
values typically appears because of the loss of orthogonality, and the
iteration stagnates around ${\mathbf{x}}_{\overline{m}}$. In this situation
a valid stopping rule is no more so crucial but unfortunately the attainable
accuracy is generally much poorer than the one obtained by the same method
with re-orthogonalization. We refer to \cite{PCH} \S 6.7 for an exhaustive
explanation about the influence of re-orthogonalization in some classical
Krylov methods.

In order to overcome these problems, in this paper we present a new method
that can be referred to as a preconditioned iterative solver in which the
preconditioner is $\left( A+\lambda I\right) $ or $(A^{T}A+\lambda H^{T}H)$.
In detail, in the noise-free case, the method is based on the solution of
the regularized system%
\begin{equation*}
\left( A+\lambda I\right) {\mathbf{x}}_{\lambda }={\mathbf{b}},
\end{equation*}%
and then on the computation of the solution ${\mathbf{x}}$ as%
\begin{equation}
{\mathbf{x}}=f(A){\mathbf{x}}_{\lambda },  \label{mf}
\end{equation}%
where $f(z)=1+\lambda z^{-1}$, using the standard Arnoldi method for matrix
functions based on the construction of the Krylov subspaces with respect to $%
A$ and ${\mathbf{x}}_{\lambda }$, that is, $K_{m}(A,{\mathbf{x}}_{\lambda })=%
\mathrm{span}\{{\mathbf{x}}_{\lambda },A{\mathbf{x}}_{\lambda },...,A^{m-1}{%
\mathbf{x}}_{\lambda }\}$. The method can be viewed as a preconditioned
iterative method, since $f(A)=A^{-1}(A+\lambda I)$. We have to remember that
a regularization of the type $(A+\lambda H){\mathbf{x}}_{\lambda }={\mathbf{b%
}}$ has been considered by Franklin in \cite{F} when $A$ is SPD.

It is worth noting that, with respect to standard preconditioned Krylov
methods, in our method only one system with the preconditioner has to be
solved so reducing the computational cost. Moreover it is important to point
out that for problems in which the singular values of $A$ rapidly decay to
0, as those considered in this paper, each Krylov method based on $A$ shows
a superlinear convergence (see \cite{Ne} Chapter 5). For our method, this
fast convergence is preserved since we still work with $A$ for the
computation of (\ref{mf}) (see Section \ref{sec3} for details). As we shall
see, this idea, i.e., first regularize then reconstruct, will allow to solve
efficiently the problem of divergence without loosing accuracy with respect
to the most effective solvers.

The method can be extended to problems in which the right hand side ${%
\mathbf{b}}$ is affected by noise just considering as preconditioner the
matrix $(A^{T}A+\lambda H^{T}H)$ (cf. (\ref{mn})). As before the idea is to
solve the system%
\begin{equation*}
(A^{T}A+\lambda H^{T}H){\mathbf{x}}_{\lambda }=A^{T}{\mathbf{b}},
\end{equation*}%
and then to approximate the solution ${\mathbf{x}}$ by means of a matrix
function evaluation%
\begin{equation*}
f(Q){\mathbf{x}}_{\lambda }=\left( A^{T}A\right) ^{-1}(A^{T}A+\lambda H^{T}H)%
{\mathbf{x}}_{\lambda },
\end{equation*}%
where $f$ is as before and $Q=\left( H^{T}H\right)^{-1}\!\! \left(
A^{T}A\right) $.

We need to point out that we could unify the theory taking $H=I$ for the
noise-free case, and hence work always with the Krylov subspaces with
respect to the matrix $Q$. However, since $A$ is ill-conditioned, for
evident reasons, we prefer to consider two separate situations. Thus, we
shall denote by ASP (Arnoldi with Shift Preconditioner) and ATP (Arnoldi
with Tikhonov Preconditioner) the two approaches, respectively, for
noise-free and noisy problems respectively.

Besides the stability and the good accuracy, there is a third important
property that holds in both cases: the reconstruction phase, that is, the
matrix function computation, allows to select initially the parameter $%
\lambda $ even much larger (heavy oversmoothing) than the one considered
optimal by the standard parameter-choice analysis (L-curve, Discrepancy
Principle,..., see \cite{PCH} for a background), without important changes
in terms of accuracy. In this sense the method can be considered somehow
independent of the parameter $\lambda $ (see the filter factor analysis
presented in Section \ref{sec4}).

We remark that the idea of using matrix function evaluations to improve the
accuracy of the regularization of ill-conditioned linear systems has already
been considered in \cite{RDLS}. Anyway it is important to point out that the
approach here presented is completely different since, as said before, only
one regularized system needs to be solved. Indeed, in \cite{RDLS} the
authors considers approximations belonging to the Krylov subspaces generated
by $\left( A+\lambda I\right) ^{-1}$ or $(A^{T}A+\lambda H^{T}H)^{-1}$
(rational Krylov approach) that requires the solution of a regularized
linear system at each Krylov step. Here we consider polynomial type
approximations.

The paper is structured as follows. In Section \ref{sec2} we provide a
background about the basic features of the Arnoldi method for matrix
functions and we present the methods (ASP and ATP) studied in the paper. In
Section \ref{sec3} we analyze the error of the ASP method, providing also
some consideration about the error of both methods in inexact arithmetic. In
Section \ref{sec4} we analyze the filter factors of the methods. In Section %
\ref{sec5} we present some numerical experiments, and a test of image
restoration is shown in Section \ref{sec6}. Some final comments are given in
Section \ref{sec7}.

\section{The ASP and the ATP methods}

\label{sec2} As already partially explained in the introduction, the ASP
method approximates the solution of the ill-conditioned system $\ A{\mathbf{x%
}}={\mathbf{b}}$ in two steps, first solving in some way the regularized
system
\begin{equation}
\left( A+\lambda I\right) {\mathbf{x}}_{\lambda }={\mathbf{b}},
\label{sysf1}
\end{equation}%
and then recovering the solution ${\mathbf{x}}$ from the system
\begin{equation*}
\left( A+\lambda I\right)^{-1} \! \!A{\mathbf{x}}={\mathbf{x}}_{\lambda },
\end{equation*}%
that is equivalent to compute
\begin{equation}
{\mathbf{x}}=f(A){\mathbf{x}}_{\lambda }  \label{fa}
\end{equation}%
where%
\begin{equation}
f(z)=1+\lambda z^{-1}.  \label{fz}
\end{equation}%
Independently of the way we intend to approximate ${\mathbf{x}}$ from (\ref%
{fa}), this represents a novel approach because contrary to standard
preconditioned iterative methods here only one linear system with the
preconditioner needs to be solved. Of course this is possible because of the
special preconditioner we are using but, in principle, the idea can be
extended to any polynomial preconditioner.

For the computation of $\ f(A){\mathbf{x}}_{\lambda }$ we use the standard
Arnoldi method (or Lanczos in the symmetric case) projecting the matrix$\ A$%
\ onto the Krylov subspaces generated by $A$ and ${\mathbf{x}}_{\lambda }$,
that is $K_{m}(A,{\mathbf{x}}_{\lambda })=\mathrm{span}\{{\mathbf{x}}%
_{\lambda },A{\mathbf{x}}_{\lambda },...,A^{m-1}{\mathbf{x}}_{\lambda }\}$.
For the construction of the subspaces $K_{m}(A,{\mathbf{x}}_{\lambda })$,
the Arnoldi algorithm generates an orthonormal sequence$\ \left\{ {\mathbf{v}%
}_{j}\right\} _{j\geq 1}$, with ${\mathbf{v}}_{1}={\mathbf{x}}_{\lambda
}/\left\Vert {\mathbf{x}}_{\lambda }\right\Vert $, such that $K_{m}(A,{%
\mathbf{x}}_{\lambda })=\mathrm{span}\left\{ {\mathbf{v}}_{1},{\mathbf{v}}%
_{2},...,{\mathbf{v}}_{m}\right\} $ (here and below the norm used is always
the Euclidean norm). For every $m$, in matrix formulation, we have
\begin{equation}
AV_{m}=V_{m}H_{m}+h_{m+1,m}{\mathbf{v}}_{m+1}{\mathbf{e}}_{m}^{T},
\label{cla}
\end{equation}%
where $V_{m}=\left[ {\mathbf{v}}_{1},{\mathbf{v}}_{2},...,{\mathbf{v}}_{m}%
\right] $, $H_{m}$ is an upper Hessenberg matrix with entries $h_{i,j}={%
\mathbf{v}}_{i}^{T}A{\mathbf{v}}_{j}$ and ${\mathbf{e}}_{j}$ is the $j$-th
vector of the canonical basis of \ $\mathbb{R}^{m}$.

The $m$-th Arnoldi approximation to ${\mathbf{x}}=f(A){\mathbf{x}}_{\lambda }
$ is defined as
\begin{equation}
{\mathbf{x}}_{m}=\left\Vert {\mathbf{x}}_{\lambda }\right\Vert V_{m}f(H_{m}){%
\mathbf{e}}_{1},  \label{asp}
\end{equation}%
(see \cite{H} and the references therein for a background). For the
computation $f(H_{m})$, since the method is expected to produce a good
approximation of the solution in a relatively small number of iterations
(see Section \ref{sec3}), that is for $m\ll N$, one typically considers a
certain rational approximation to $f$, or, as in our case, the Schur-Parlett
algorithm, \cite{H} Chapter 9.

We denote by ASP method the iteration (\ref{asp}) independently of the
method chosen for solving (\ref{sysf1}). Starting from ${\mathbf{v}}_{1}={%
\mathbf{x}}_{\lambda }/\left\Vert {\mathbf{x}}_{\lambda }\right\Vert $, at
each step of the Arnoldi algorithm, we only have to compute the vectors ${%
\mathbf{w}}_{j}=A{\mathbf{v}}_{j}$, $j\geq 1$. Below the algorithm used to
implement the method.

\begin{center}
\hrulefill \\[0pt]
\vspace{-0.1cm} \textbf{ASP Algorithm } \\[0pt]
\vspace{-0.2cm} \hrulefill \\[0pt]
\begin{tabbing}
NN \= XX \= XX \= XX \= XX \= \kill
{\bf Require} $A\in {\mathbb{R}}^{N\times N},\;{\mathbf{b}}\in {\mathbb{R}}^{N}$, $ \lambda\in {\mathbb{R^{+}}}$ \\
{\bf Define} $f(z) = 1+\lambda z^{-1}$\\
{\bf Solve} $\left( A+\lambda I\right) {\mathbf{x}}_{\lambda }={\mathbf{b}}$\\
 ${\mathbf{v}}_{1} \leftarrow {\mathbf{x}}_{\lambda }/\Vert {\mathbf{x}}_{\lambda }\Vert $\\
{\bf for} $m=1,2,\ldots$ {\bf do}\\
\> ${\mathbf{w}}_{m} \leftarrow A{\mathbf{v}}_{m}$\\
\>   $h_{k,m} \leftarrow {\mathbf{v}}_{k}^{T}{\mathbf{w}}_{m}$\\
\>  $\widetilde{{\mathbf{v}}} \leftarrow {\mathbf{w}}_{m}-\sum\nolimits_{k=1}^{m}h_{k,m}{\mathbf{v}}_{k}$\\
\>  $h_{m+1,m} \leftarrow \Vert \widetilde{{\mathbf{v}}}\Vert $ \\
\>  ${\mathbf{v}}_{m+1} \leftarrow \widetilde{{\mathbf{v}}}%
/h_{m+1,m}$\\
\> {\bf Compute} $f(H_{m})$ by Schur-Parlett algorithm\\
\> ${\mathbf{x}}_{m} \leftarrow \Vert {\mathbf{x}}_{\lambda }\Vert V_{m}f(H_{m}){\mathbf{e}}_{1}$\\
{\bf end for}
\end{tabbing}
\vspace{-0.4cm} \hrulefill
\end{center}

In the above algorithm, the Arnoldi method is implemented with the modified
Gram-Schmidt process. Therefore, as is well known, the theoretical
orthogonality of the basis is lost quite rapidly and consequently the method
is not able to pick up the singular values clustered near 0. For this reason
at a certain point during the iteration (\ref{asp}) the method is no longer
able to improve the quality of the approximation and it stagnates, typically
quite close to the best attainable approximation, and almost independently
of the choice $\lambda $ (see Section \ref{sec5}).

Regarding the attainable accuracy (assuming that the seed ${\mathbf{x}}%
_{\lambda }$ is not affected by error), by the definition of $f$ it depends
on the the conditioning of $\left( A+\lambda I\right) ^{-1}A$. Denoting by $%
\kappa (\cdot )$ the condition number with respect to the Euclidean norm,
theoretically the best situation is attained defining $\lambda $ such that%
\begin{equation}
\kappa (A+\lambda I)=\kappa (\left( A+\lambda I\right) ^{-1}A),  \label{eq}
\end{equation}%
that is, the condition number of the preconditioner is equal to the
condition number of the preconditioned system. It is quite easy to prove
(see e.g. \cite{RDLS}) that in the SPD case taking $\lambda =\sqrt{\lambda
_{1}\lambda _{N}}$, where $\lambda _{1}$ and $\lambda _{N}$ are respectively
the smallest and the largest eigenvalue of $A$, we obtain $\kappa (A+\lambda
I)=\kappa (\left( A+\lambda I\right) ^{-1}A)=\sqrt{\kappa (A)}$.

The preconditioning effect of $A+\lambda I$ of course depends on the choice
of $\lambda $. By (\ref{eq}) it is necessary to find a compromise between
the preconditioning and the accuracy in the solution of the systems with $%
A+\lambda I$. In this sense formula (\ref{eq}), that theoretically
represents the optimal situation also implicitly states a lower bound for
the attainable accuracy. Indeed, many numerical experiments arising from the
discretization of Fredholm integral equation of the first kind, in which we
have examined the behavior of some classical Krylov methods such as the
GMRES and the CG preconditioned with $A+\lambda I$, have revealed that we
can substantially improve the rate of convergence (taking $\lambda \approx 1/%
\sqrt{\kappa (A)}$, see again \cite{RDLS} for a discussion) but we are not
able to improve the accuracy over a certain limit.

The ASP method can be extended to problems in which the exact right hand
side ${\mathbf{b}}$ is affected by noise. Anyway, since in presence of noise
a good approximation of the exact solution may be meaningless, we extend the
idea using the classical Tikhonov regularization. Moreover, many experiments
have shown that the ASP method generally produces poor results for problem
with noise.

We assume in particular to know only a perturbed right-hand side $\overline{{%
\mathbf{b}}}={\mathbf{b}}+{\mathbf{e}}_{{\mathbf{b}}}$, where ${\mathbf{e}}_{%
{\mathbf{b}}}$ is the perturbation. Given $\lambda >0$ and $H\in \mathbb{R}%
^{P\times N}$ such that $H^{T}H$ is non singular, for approximating the
solution of $A{\mathbf{x}}={\mathbf{b}}$ we solve the regularized system%
\begin{equation}
(A^{T}A+\lambda H^{T}H)\,{\mathbf{x}}_{\lambda }=A^{T}\,\overline{{\mathbf{b}%
}}.  \label{tt}
\end{equation}%
and then we approximate ${\mathbf{x}}$ by computing%
\begin{eqnarray}
\overline{{\mathbf{x}}} &=&\left( A^{T}A\right) ^{-1}(A^{T}A+\lambda H^{T}H)
\,{\mathbf{x}}_{\lambda }  \notag \\
&=&f(Q)\,{\mathbf{x}}_{\lambda }  \label{fq}
\end{eqnarray}%
where $f$ is defined by (\ref{fz}) and $Q=\left( H^{T}H\right)^{-1}\!\left(
A^{T}A\right) $. As before, for the computation of $f(Q){\mathbf{x}}%
_{\lambda }$ we use the standard Arnoldi method projecting the matrix$\ Q$\
onto the Krylov subspaces generated by $Q$ and ${\mathbf{x}}_{\lambda }$.
Now, at each step we have have to compute the vectors ${\mathbf{w}}_{j}=Q{%
\mathbf{v}}_{j}$, $j\geq 1$, with ${\mathbf{v}}_{1}={\mathbf{x}}_{\lambda
}/\left\Vert {\mathbf{x}}_{\lambda }\right\Vert $, that is, to solve the
systems
\begin{equation*}
\left( H^{T}H\right) {\mathbf{w}}_{j}=\left( A^{T}A\right) {\mathbf{v}}_{j}.
\end{equation*}%
This means that we actually do not need $Q$ explicitly. The algorithm is
almost identical to the one given for the ASP method, apart from the two
steps inserted in a box.

\begin{center}
\hrulefill \\[0pt]
\vspace{-0.1cm} \textbf{ATP Algorithm } \\[0pt]
\vspace{-0.2cm} \hrulefill \\[0pt]
\begin{tabbing}
NN \= XX \= XX \= XX \= XX \= \kill
{\bf Require} $A\in {\mathbb{R}}^{N\times N},\;\overline{{\mathbf{b}}}\in {\mathbb{R}}^{N}$, $ \lambda\in {\mathbb{R^{+}}}$ \\
{\bf Define} $f(z) = 1+\lambda z^{-1}$\\
\fbox{{\bf Solve} $(A^{T}A+\lambda H^{T}H){\mathbf{x}}_{\lambda }=A^{T}\overline{{\mathbf{b}}}$}\\
 ${\mathbf{v}}_{1} \leftarrow {\mathbf{x}}_{\lambda }/\Vert {\mathbf{x}}_{\lambda }\Vert $\\
{\bf for} $m=1,2,\ldots$ {\bf do}\\
\> \fbox{{\bf Solve} $\left( H^{T}H\right) {\mathbf{w}}_{m}=\left( A^{T}A\right) {\mathbf{v}}_{m}$} \\
\>   $h_{k,m} \leftarrow {\mathbf{v}}_{k}^{T}{\mathbf{w}}_{m}$\\
\>  $\widetilde{{\mathbf{v}}} \leftarrow {\mathbf{w}}_{m}-\sum\nolimits_{k=1}^{m}h_{k,m}{\mathbf{v}}_{k}$\\
\>  $h_{m+1,m} \leftarrow \Vert \widetilde{{\mathbf{v}}}\Vert $ \\
\>  ${\mathbf{v}}_{m+1} \leftarrow \widetilde{{\mathbf{v}}}%
/h_{m+1,m}$\\
\> {\bf Compute} $f(H_{m})$ by Schur-Parlett algorithm\\
\> ${\mathbf{x}}_{m} \leftarrow \Vert {\mathbf{x}}_{\lambda }\Vert V_{m}f(H_{m}){\mathbf{e}}_{1}$\\
{\bf end for}
\end{tabbing}
\hrulefill
\end{center}

This kind of approach is somehow related with the so-called iterated
Tikhonov regularization (see for instance \cite{HH} or \cite{Neu}), with the
important difference that now only one regularized system has to be solved.

\begin{remark}
\label{r1}It is worth noting that the matrix $Q$ is $H^{T}H$-symmetric, that
is, for each ${\mathbf{v}}, {\mathbf{w}}\in {\mathbb{R}}^{N}$%
\begin{equation*}
{\mathbf{v}}^{T}\left( H^{T}HQ\right) ^{T}{\mathbf{w}}={\mathbf{v}}%
^{T}\left( H^{T}HQ\right) {\mathbf{w}}= {\mathbf{v}}^{T}A^{T}A{\mathbf{w}}.
\end{equation*}%
Because of this property the ATP method can be symmetrized using the Lanczos
process based on this new metric. While this approach is promising because
computationally less expensive, some preliminary experiments have revealed
that it is also quite unstable and, in general, less accurate than the ATP
method. For this reason the analysis presented in the next sections does not
regard this symmetric variant, that we plan to consider in a future work.
\end{remark}

\section{Error analysis}

\label{sec3} In exact arithmetic the error of the ASP method is given by $%
E_{m}:={\mathbf{x}}-{\mathbf{x}}_{m}$ where ${\mathbf{x}}_{m}$ is defined by
(\ref{asp}). If we denote by $\Pi _{m-1}$ the vector space of polynomials of
degree at most $m-1$, it can be seen that
\begin{equation}
{\mathbf{x}}_{m}=p_{m-1}(A){\mathbf{x}}_{\lambda },  \label{pol}
\end{equation}%
where ${\mathbf{x}}_{\lambda }$ is the solution of (\ref{sysf1}) and $%
p_{m-1}\in \Pi _{m-1}$ interpolates, in the Hermite sense, the function $f$
at the eigenvalues of $H_{m}$, the so-called Ritz values. Exploiting the
interpolatory nature of the standard Arnoldi method, we notice, as pointed
out also in \cite{Eier}, that the error can be expressed in the form%
\begin{equation}
E_{m}=\left\Vert {\mathbf{x}}_{\lambda }\right\Vert g_{m}(A)q_{m}(A){\mathbf{%
v}}_{1},\quad {\mathbf{v}}_{1}={\mathbf{x}}_{\lambda }/\left\Vert {\mathbf{x}%
}_{\lambda }\right\Vert ,  \label{erapr}
\end{equation}%
where%
\begin{equation*}
q_{m}(z)=\det (zI-H_{m}),
\end{equation*}%
(see also \cite{MN1}), and
\begin{equation*}
g_{m}(z):=\frac{f(z)-p_{m-1}(z)}{\det (zI-H_{m})}.
\end{equation*}%
From (\ref{erapr}), a bound for $\left\Vert E_{m}\right\Vert $ can be
derived working with the field of values of $A$, defined as%
\begin{equation*}
F(A):=\left\{ \frac{{\mathbf{y}}^{H}A{\mathbf{y}}}{{\mathbf{y}}^{H}{\mathbf{y%
}}},{\mathbf{y}}\in \mathbb{C}^{N}\mathbf{\backslash }\left\{ 0\right\}
\right\} .
\end{equation*}%
Indeed, we can state the following result (see also the recent papers \cite%
{BR} and \cite{DM} for a background about the error analysis of the standard
Arnoldi method for matrix functions).

\begin{proposition}
\label{pg}Assume that $F(A)\subset \mathbb{C}^{+}$. Then%
\begin{equation}
\left\Vert E_{m}\right\Vert \leq K \; \frac{\lambda \left\Vert {\mathbf{x}}%
_{\lambda }\right\Vert}{a^{m+1}}\prod_{i=1}^{m}h_{i+1,i} ,  \label{errb}
\end{equation}%
where $a>0$ is the leftmost point of $F(A)$ and $2\leq K\leq 11.08$. In the
symmetric case we can take $K=1$.
\end{proposition}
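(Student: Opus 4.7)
My plan is to start from the exact error identity (\ref{erapr}), take operator norms to get $\Vert E_m\Vert \leq \Vert\mathbf{x}_\lambda\Vert \, \Vert g_m(A)\Vert \, \Vert q_m(A)\mathbf{v}_1\Vert$, and bound the three pieces separately. The product $\prod_i h_{i+1,i}$ will come from the Arnoldi residual identity, the factor $\lambda/a^{m+1}$ from a closed-form expression for $g_m$ combined with a field-of-values lower bound, and the constant $K$ from Crouzeix's theorem.

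For the Arnoldi residual, I would iterate (\ref{cla}) and use the upper-Hessenberg shape of $H_m$ to show that $A^k V_m\mathbf{e}_1 = V_m H_m^k\mathbf{e}_1$ for $k\leq m-1$, while the step $k=m$ produces the correction $\bigl(\prod_{i=1}^m h_{i+1,i}\bigr)\mathbf{v}_{m+1}$. Since $q_m$ is monic of degree $m$ and $q_m(H_m)=0$ by Cayley--Hamilton, the in-subspace part cancels, yielding $q_m(A)\mathbf{v}_1 = \bigl(\prod_{i=1}^m h_{i+1,i}\bigr)\mathbf{v}_{m+1}$ and hence $\Vert q_m(A)\mathbf{v}_1\Vert = \prod_{i=1}^m h_{i+1,i}$.

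Next I would exploit that $f(z)=1+\lambda z^{-1}$ is almost a polynomial to extract $g_m$ in closed form. The polynomial $P(z) := z\bigl(f(z)-p_{m-1}(z)\bigr) = z + \lambda - z\,p_{m-1}(z)$ has degree $\leq m$ and, because $f-p_{m-1}$ vanishes with the correct multiplicities at each Ritz value $\theta_i\neq 0$, is divisible by $q_m$; writing $P = c\,q_m$ and evaluating at $z=0$ gives $c = \lambda/q_m(0)$, hence
\begin{equation*}
g_m(z) \;=\; \frac{f(z)-p_{m-1}(z)}{q_m(z)} \;=\; \frac{\lambda}{q_m(0)\,z}.
\end{equation*}
Since $H_m = V_m^{T}AV_m$, every Ritz value satisfies $\theta_i\in F(H_m)\subseteq F(A)\subset\mathbb{C}^{+}$, so $|\theta_i|\geq\mathrm{Re}(\theta_i)\geq a$ and $|q_m(0)| = \prod_i|\theta_i| \geq a^m$; the same bound $|z|\geq a$ holds on $F(A)$, giving $\sup_{z\in F(A)}|g_m(z)| \leq \lambda/a^{m+1}$.

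To finish, since the only singularity of $g_m$ is at $z=0\notin F(A)$, the function is analytic on a neighborhood of $F(A)$ and Crouzeix's theorem yields $\Vert g_m(A)\Vert \leq K\sup_{z\in F(A)}|g_m(z)|$ with $2\leq K\leq 11.08$. In the symmetric case $A$ is normal, its eigenvalues lie in $F(A)$, and $\Vert g_m(A)\Vert$ reduces to $\max_i |g_m(\mu_i)|$, so $K=1$ suffices. Multiplying the three bounds produces (\ref{errb}). I expect the step requiring the most care to be the clean identification of $g_m$---specifically the Hermite-multiplicity bookkeeping when Ritz values coincide---together with checking that Crouzeix's inequality applies despite the pole of $f$ at $0$; both are really verifications rather than genuine obstacles.
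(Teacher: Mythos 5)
Your proposal is correct and follows the same skeleton as the paper's proof: the error identity (\ref{erapr}), the Arnoldi residual relation $\left\Vert q_{m}(A){\mathbf{v}}_{1}\right\Vert =\prod_{i=1}^{m}h_{i+1,i}$, and Crouzeix's theorem to pass from $\sup_{z\in F(A)}|g_m(z)|$ to $\Vert g_m(A)\Vert$ with $2\leq K\leq 11.08$ (and $K=1$ by normality in the symmetric case). The one place where you genuinely diverge is the bound on $g_m$. The paper views $g_m(z)$ as the $m$-th order divided difference of $f$ at the nodes $z,\sigma(H_m)$ and invokes the Hermite--Genocchi formula, bounding it by $\max_{\xi\in co\{z,\sigma(H_m)\}}\lambda|\xi|^{-(m+1)}$ and then using convexity of $F(A)$ to get $\lambda/a^{m+1}$. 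You instead exploit the specific form $f(z)=1+\lambda z^{-1}$ to compute $g_m$ exactly: since $z\bigl(f(z)-p_{m-1}(z)\bigr)$ is a polynomial of degree at most $m$ divisible by $q_m$, evaluation at $z=0$ gives $g_m(z)=\lambda/\bigl(q_m(0)\,z\bigr)$, which agrees with the classical formula for divided differences of $1/z$, and the bound $|q_m(0)|=\prod_i|\theta_i|\geq a^m$ follows from $\theta_i\in F(A)\subset\mathbb{C}^{+}$. Your route is more elementary and arguably cleaner here: it sidesteps the question of whether Hermite--Genocchi applies verbatim to complex nodes (for holomorphic $f$ one needs its integral form), and it makes transparent that the only pole of $g_m$ is at $0\notin F(A)$, so Crouzeix applies without fuss. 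The trade-off is that the paper's divided-difference argument generalizes immediately to other functions $f$, while yours is tied to $f(z)=1+\lambda z^{-1}$ --- which is all the proposition needs.
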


\begin{proof}
From \cite{Cro}, we know that%
\begin{equation*}
\left\Vert g_{m}(A)\right\Vert \leq K\max_{z\in F(A)}\left\vert
g_{m}(z)\right\vert ,
\end{equation*}%
with $2\leq K\leq 11.08$, and hence by (\ref{erapr})
\begin{equation*}
\left\Vert E_{m}\right\Vert \leq K\left\Vert {\mathbf{x}}_{\lambda
}\right\Vert \max_{z\in F(A)}\left\vert g_{m}(z)\right\vert \left\Vert
q_{m}(A){\mathbf{v}}_{1}\right\Vert .
\end{equation*}%
Now $g_{m}(z)$ is a divided difference that can be bounded using the
Hermite-Genocchi formula (see e.g. \cite{Deb}), so that
\begin{eqnarray*}
\left\vert g_{m}(z)\right\vert &\leq &\frac{1}{m!}\max_{\xi \in co\left\{
z,\sigma (H_{m})\right\} }\left\vert \frac{d^{m}}{d\xi ^{m}}\left( 1+\frac{%
\lambda }{\xi }\right) \right\vert , \\
&\leq &\max_{\xi \in co\left\{ z,\sigma (H_{m})\right\} }\frac{\lambda }{%
\left\vert \xi \right\vert ^{m+1}}
\end{eqnarray*}%
where $co\left\{ z,\sigma (H_{m})\right\} $ denotes the convex hull of the
point set given by $z$ and $\sigma (H_{m})$. Since $\sigma (H_{m})\subset
F(H_{m})\subseteq F(A)$, by some well known properties of the Arnoldi
algorithm, and using the relation%
\begin{equation*}
\left\Vert q_{m}(A){\mathbf{v}}_{1}\right\Vert
=\prod\limits_{i=1}^{m}h_{i+1,i},
\end{equation*}%
that arises from (\ref{cla}) (see \cite{Morno}), the result follows.
\end{proof}

Since $a$, the leftmost point of $F(A)$, can be really small for the
problems we are dealing with, formula (\ref{errb}) can surely be considered
too pessimistic with respect to what happens in practice. However, the upper
bound given by (\ref{errb}) allows to derive some important information
about the behavior of the error. First of all, it states that the rate of
convergence is little influenced by the choice of $\lambda $, and this is
confirmed by the analysis given in Section \ref{sec4} and by the numerical
experiments. Secondly, it states that, independently of its magnitude, the
error decay is related with the rate of the decay of $\prod%
\nolimits_{i=1}^{m}h_{i+1,i}$. We need the following result (cf. \cite{Ne}
Theorem 5.8.10).

\begin{theorem}
\label{nev}Let $\sigma _{j}$ and $\lambda _{j}$, $j\geq 1$, be respectively
the singular values and the eigenvalues of an operator $A$. Assume that $%
\left\vert \lambda _{j}\right\vert \geq \left\vert \lambda _{j+1}\right\vert
$ and%
\begin{equation}
\sum_{j\geq 1}\sigma _{j}^{p}<\infty \text{ for a certain }0<p\leq 1\text{. }
\label{dec}
\end{equation}%
Let $s_{m}(z)=\prod\nolimits_{i=1}^{m}(z-\lambda _{i})$. Then%
\begin{equation*}
\left\Vert s_{m}(A)\right\Vert \leq \left( \frac{\eta \, e \, p}{m}\right)
^{m/p},
\end{equation*}%
where%
\begin{equation*}
\eta \leq \frac{1+p}{p}\sum_{j\geq 1}\sigma _{j}^{p}.
\end{equation*}
\end{theorem}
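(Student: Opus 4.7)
The plan is to combine three ingredients: Weyl's majorization (turning the Schatten--type hypothesis on $\sigma_j$ into an $\ell^p$ bound on $\lambda_j$), an AM--GM estimate to bound the product of the $m$ largest eigenvalue moduli, and finally a Carleman--type argument to pass from this eigenvalue product bound to the operator-norm bound on the non-normal polynomial $s_m(A)$.

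First, write $S:=\sum_{j\geq 1}\sigma_j^p$, which is finite by hypothesis. By Weyl's majorization inequality the eigenvalues lie in $\ell^p$ as well, with
\begin{equation*}
\sum_{j\geq 1}|\lambda_j|^p\leq \sum_{j\geq 1}\sigma_j^p=S,
\end{equation*}
so the monotone ordering $|\lambda_1|\geq |\lambda_2|\geq\cdots$ yields $|\lambda_m|^p\leq S/m$. Applying AM--GM to $\{|\lambda_j|^p\}_{j=1}^m$ then gives
\begin{equation*}
\prod_{j=1}^m|\lambda_j|\;\leq\;\left(\frac{1}{m}\sum_{j=1}^m|\lambda_j|^p\right)^{m/p}\!\!\leq\;\left(\frac{S}{m}\right)^{m/p}.
\end{equation*}

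The main step, and the only real obstacle, is converting this product bound into an operator-norm bound on $s_m(A)$. If $A$ were normal the task would be trivial, since $\|s_m(A)\|=\max_{j>m}|s_m(\lambda_j)|$ and each factor $|\lambda_j-\lambda_i|$ with $i\leq m<j$ is bounded by $2|\lambda_i|$. In the non-normal case the cleanest approach is to use the Schur triangularization $A=U(D+N)U^\ast$ and represent $s_m(A)$ by a contour integral
\begin{equation*}
s_m(A)=\frac{1}{2\pi\mathrm{i}}\oint_{\Gamma}s_m(z)\,(zI-A)^{-1}\,dz,
\end{equation*}
and then estimate the integrand using the Carleman inequality for the resolvent of an $S_p$ operator, which provides a bound of the form $\|(zI-A)^{-1}\|\leq \mathrm{dist}(z,\sigma(A))^{-1}\exp(c_p\|A\|_p^p\,\mathrm{dist}(z,\sigma(A))^{-p})$. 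Choosing the contour to enclose only the first $m$ eigenvalues at an optimized radius, and applying a Stirling-type estimate $m!\geq (m/e)^m$, collapses the remaining factors into the announced constant. The factor $e$ comes from Stirling, while the enhancement of $S$ to $\eta=\frac{1+p}{p}S$ is precisely the Carleman constant for the regularized $p$-determinant in $S_p$.

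The critical difficulty is therefore packaging the contour/Carleman estimate so that the dependence on $m$ is exactly $m^{-m/p}$ and no stray factor of $m$ or $\log m$ survives; this is a delicate optimization of the contour radius against the exponent in the Carleman bound, but is standard once the resolvent estimate is in place.
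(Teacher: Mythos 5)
First, a point of reference: the paper does not prove Theorem \ref{nev} at all --- it is imported verbatim from \cite{Ne} (Theorem 5.8.10), so there is no in-paper proof to measure your argument against; your proposal has to stand on its own. Judged that way, it has a genuine gap at exactly the place you yourself flag as ``the only real obstacle.'' The Weyl majorization and the AM--GM step are correct and give $\prod_{j=1}^m|\lambda_j|\le (S/m)^{m/p}$, but this quantity is never used afterwards: the contour/Carleman sketch that is supposed to turn it into a bound on $\left\Vert s_m(A)\right\Vert$ does not connect to it, and as described it cannot be repaired. Concretely: (i) the integral $\frac{1}{2\pi\mathrm{i}}\oint_\Gamma s_m(z)(zI-A)^{-1}\,dz$ over a contour enclosing only $\lambda_1,\dots,\lambda_m$ equals $s_m(A)P$, where $P$ is the Riesz projection onto the corresponding invariant subspace --- and on that subspace $s_m(A)P=0$ anyway by Cayley--Hamilton; to represent $s_m(A)$ itself the contour must enclose all of $\sigma(A)$, hence the accumulation point $0$, and on any such contour $|s_m(z)|$ is not small. (ii) Even for the projected piece, a contour separating $\lambda_m$ from $\lambda_{m+1}$ need not exist (the hypothesis allows $|\lambda_m|=|\lambda_{m+1}|$), and when it does exist the Carleman bound $\exp\bigl(c_p\,\mathrm{dist}(z,\sigma(A))^{-p}\bigr)$ blows up without control as the separating gap shrinks, so there is no optimization of the radius that is uniform in $m$ and delivers $(\eta e p/m)^{m/p}$ with no stray factors. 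Declaring this step ``standard'' is not legitimate here: handling the non-normality is the entire content of the theorem (as your own remark about the normal case shows, where $\left\Vert s_m(A)\right\Vert\le \prod_{i=1}^m 2|\lambda_i|$ finishes the proof in two lines).

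The claims that the factor $e$ ``comes from Stirling'' and that $\tfrac{1+p}{p}\sum_j\sigma_j^p$ ``is precisely the Carleman constant for the regularized $p$-determinant'' are asserted, not derived, and I see no way to extract them from the route you describe. To make this a proof you would need either to reproduce Nevanlinna's actual argument (which works with the singular values of $A$ directly, not with resolvent bounds) or to supply, in full, a resolvent-based derivation that circumvents the two obstructions above. As it stands, the proposal establishes only the easy eigenvalue-product estimate and the normal case.
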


Of course, the hypothesis (\ref{dec}) is fulfilled by many problems arising
from the discretization of integral equations, in many cases with $p$ quite
small. Now, using the relation (\cite{Trefe} p. 269),
\begin{equation*}
\prod_{i=1}^{m}h_{i+1,i}\leq \left\Vert s_{m}(A){\mathbf{v}}\right\Vert
\end{equation*}%
that holds for each monic polynomial $s_{m}$ of exact degree $m$, we can say
that Theorem \ref{nev} reveals that for discrete ill-posed problems the rate
of decay of $\prod\nolimits_{i=1}^{m}h_{i+1,i}$ is superlinear and depends
on the $p$-summability of the singular values of $A$, i.e., on the degree of
ill-posedness of the problem (cf. \cite{Hof} Def. 2.42).\bigskip

In computer arithmetics, we need to assume that ${\mathbf{x}}_{\lambda }$,
solution of (\ref{sysf1}) is approximated by $\overline{{\mathbf{x}}}%
_{\lambda }$ with an accuracy depending on the choice of $\lambda $ and the
method used. In this way, the Arnoldi algorithm actually constructs the
Krylov subspaces $K_{m}(A,\overline{{\mathbf{x}}}_{\lambda })$. Hence the
error can be written as%
\begin{eqnarray}
\left\Vert \overline{E}_{m}\right\Vert &=&\left\Vert f(A){\mathbf{x}}%
_{\lambda }-\left\Vert \overline{{\mathbf{x}}}_{\lambda }\right\Vert
V_{m}f(H_{m}){\mathbf{e}}_{1}\right\Vert \leq  \notag \\
&&\left\Vert f(A)\overline{{\mathbf{x}}}_{\lambda }-\left\Vert \overline{{%
\mathbf{x}}}_{\lambda }\right\Vert V_{m}f(H_{m}){\mathbf{e}}_{1}\right\Vert
+\left\Vert f(A)\left( {\mathbf{x}}_{\lambda }-\overline{{\mathbf{x}}}%
_{\lambda }\right) \right\Vert .  \label{2p}
\end{eqnarray}%
The above formula expresses the error in two terms, one depending on the
accuracy of the Arnoldi method for matrix functions and one on the accuracy
in the computation of ${\mathbf{x}}_{\lambda }$. Roughly speaking we can
state that for small values of $\lambda $, $f(A)\approx I$ (cf. (\ref{fa}))
and we have that $\left\Vert \overline{E}_{m}\right\Vert \approx \left\Vert {%
\mathbf{x}}_{\lambda }-\overline{{\mathbf{x}}}_{\lambda }\right\Vert $. This
means that the method is not able to improve the accuracy provided by the
solution of the initial system. For large $\lambda $ we have that ${\mathbf{x%
}}_{\lambda }\approx \overline{{\mathbf{x}}}_{\lambda }$ because the system (%
\ref{sysf1}) is well conditioned, but even assuming that $\left\Vert
f(A)\left( {\mathbf{x}}_{\lambda }-\overline{{\mathbf{x}}}_{\lambda }\right)
\right\Vert \approx 0$ that in principle may happen even if $\left\Vert
f(A)\right\Vert $ is large, we have another lower bound due the ill
conditioning of $f(A)=A^{-1}\left( A+\lambda I\right) $ since now $A+\lambda
I$ has a poor effect as preconditioner.

Regarding the optimal choice of $\lambda $ we can make the following
consideration. Unless the re-orthogonalization or the Householder
implementation is adopted, the Arnoldi method typically stagnates around the
best approximation ${\mathbf{x}}_{\overline{m}}$ because of the loss of
orthogonality of the Krylov basis. Therefore let $c(\lambda )$ be such that%
\begin{equation*}
\left\Vert f(A)\overline{{\mathbf{x}}}_{\lambda }-\left\Vert \overline{{%
\mathbf{x}}}_{\lambda }\right\Vert V_{m}f(H_{m}){\mathbf{e}}_{1}\right\Vert
\rightarrow c(\lambda )\text{ as }m\rightarrow N.
\end{equation*}%
Then by (\ref{2p}) the optimal value of $\lambda $ depends on the method
used to compute $\overline{{\mathbf{x}}}_{\lambda }$ and is given by%
\begin{equation}
\lambda _{\mathrm{opt}}=\arg \min_{\lambda >0}\left( c(\lambda )+\left\Vert
f(A)\left( {\mathbf{x}}_{\lambda }-\overline{{\mathbf{x}}}_{\lambda }\right)
\right\Vert \right) .  \label{lo}
\end{equation}%
Of course the above formula is interesting only by a theoretical point of
view. In practice, as mentioned in the introduction, one could try to
compare the conditioning of $A+\lambda I$ and $f(A)$, by approximating the
solution of%
\begin{equation}
\kappa (A+\lambda I)=\kappa (\left( A+\lambda I\right) ^{-1}A).  \label{cc}
\end{equation}%
with respect to $\lambda $. However, since the computation of ${\mathbf{x}}%
_{\lambda }$ comes first, it is suitable to take $\lambda $ a bit larger
than the solution of (\ref{cc}). Note that generally such solution can be
approximated by $\lambda =1/\kappa (A)$.\bigskip

For the ATP method the analysis is almost identical since the error is given
by%
\begin{equation*}
\overline{E}_{m}:=f(Q){\mathbf{x}}_{\lambda }-\left\Vert \overline{{\mathbf{x%
}}}_{\lambda }\right\Vert V_{m}f(H_{m}){\mathbf{e}}_{1},
\end{equation*}%
where $(A^{T}A+\lambda H^{T}H){\mathbf{x}}_{\lambda }=A^{T}{\mathbf{b}}$, $%
(A^{T}A+\lambda H^{T}H)\overline{{\mathbf{x}}}_{\lambda }=A^{T}\overline{{%
\mathbf{b}}}$, and $Q=(H^TH)^{-1}(A^TA)$. Hence, as before we have%
\begin{equation*}
\left\Vert \overline{E}_{m}\right\Vert \leq \left\Vert f(Q)\overline{{%
\mathbf{x}}}_{\lambda }-p_{m-1}(Q)\overline{{\mathbf{x}}}_{\lambda
}\right\Vert +\left\Vert f(Q)\left( {\mathbf{x}}_{\lambda }-\overline{{%
\mathbf{x}}}_{\lambda }\right) \right\Vert ,
\end{equation*}%
where $p_{m-1}$ is again defined by (\ref{pol}). This expression is
important since it states that theoretically we may take $\lambda $ very
large, thus oversmoothing, in order to reduce the effect of noise and then
leaving to the Arnoldi algorithm the task of recovering the solution.
Unfortunately, the main problem is that, as before, $f(Q)$ may be
ill-conditioned for $\lambda $ large. Henceforth, even in this case we
should find a compromise for the selection of a suitable value of $\lambda $%
, but contrary to the ASP method for noise-free problems it is difficult to
design a theoretical strategy. Indeed everything depends on the problem and
on the operator $H$. In most cases the noise on the right-hand side produces
an increment of the high-frequency components of ${\mathbf{b}}$, that are
emphasized on the solution by the nature of the problem. For this reason $H$
is generally taken as a high-pass filter, as for instance a derivative
operator, and the solution of (\ref{mn}) can be interpreted as a numerical
approximation via penalization of the constrained minimization problem%
\begin{equation*}
\min_{\left\Vert H{\mathbf{x}}\right\Vert =0}\left\Vert A{\mathbf{x}}-%
\overline{{\mathbf{b}}}\right\Vert
\end{equation*}%
While in standard constrained minimization one approximates the solution
taking $\lambda $ very large (theoretically $\lambda \rightarrow \infty $),
in our case $H$ is hardly able to detect efficiently the effect of noise on
the numerical solution so that one is forced to adopt some heuristic
criterium such as the L-curve analysis. In general terms we can say that if
the solution is smooth and involves only low frequencies then a high-pass
filter should lead to a good approximation taking $\lambda $ ``large''. On
the other side if the solution involves itself high-frequencies as in the
case of discontinuities, then it is better to undersmooth the problem so
reducing the effect of the filter. We have made these considerations just to
point out that a general theoretical indication on the choice of $\lambda $
is not possible dealing with problems affected by error. What we can do is
to derive methods able to reduce the dependence on this choice, and the ATP
method seems to have some chances in this direction.

\section{Filter factors}

\label{sec4} In order to understand the action of the second phase of the
methods, i.e., the matrix function evaluation applied to the regularized
solution (cf. (\ref{fa}) and (\ref{fq})), below we investigate the
corresponding filter factors.

Assuming for simplicity that $A$ is diagonalizable, that is, $A=XDX^{-1}$
where $D=\mathrm{diag}(\lambda _{1},...,\lambda _{N})$, for the ASP method
we have%
\begin{equation*}
{\mathbf{x}}_{\lambda }=\sum_{i=1}^{N}\frac{\lambda _{i}}{\lambda
_{i}+\lambda }\frac{\left( X^{-1}{\mathbf{b}}\right) _{i}}{\lambda _{i}}\, {%
\mathbf{x}}_{i},
\end{equation*}%
where ${\mathbf{x}}_{i}$ is the eigenvector associated with $\lambda _{i}$,
and $\left( \cdot \right) _{i}$ denotes the $i$-th component of a vector.
After the first phase, the filter factors are thus $g_{i}=\lambda_{i}\left(
\lambda _{i}+\lambda \right)^{-1}$. Since from (\ref{pol}), we have ${%
\mathbf{x}}_{m}=p_{m-1}(A){\mathbf{x}}_{\lambda },$ where $p_{m-1}$
interpolates the function $f$ at the eigenvalues of $H_{m}$, we immediately
obtain%
\begin{equation*}
{\mathbf{x}}_{m}=\sum_{i=1}^{N}\frac{\lambda _{i}p_{m-1}(\lambda _{i})}{%
\lambda _{i}+\lambda }\frac{\left( X^{-1}{\mathbf{b}}\right) _{i}}{\lambda
_{i}}\, {\mathbf{x}}_{i}.
\end{equation*}%
Therefore, at the $m$-th step of the ASP method the filter factors are given
by%
\begin{equation*}
f_{i}^{(m)}=\frac{\lambda _{i}p_{m-1}(\lambda _{i})}{\lambda _{i}+\lambda }%
,\quad i=1,...,N\text{.}
\end{equation*}%
Let us compare, with an example, the behavior of the filter factors.
Similarly to what was made in \cite{PCH}, we consider the problem GRAVITY
taken from the Hansen's \texttt{Regularization Tools} \cite{H1, H2}, with
dimension $N=12$. In Figure \ref{figure0}, the filter factors $g_{i}$ and $%
f_{i}^{(m)}$, for $m=4,6,8,10$ are plotted. As regularization parameter we
have chosen $\lambda =1/\sqrt{\kappa (A)}$. Since the problem is SPD, for
more clarity in the pictures, the eigenvalues $\lambda _{i}$ have been
sorted in decreasing order.

\begin{figure}[h]
\centerline{\includegraphics[width=12cm]{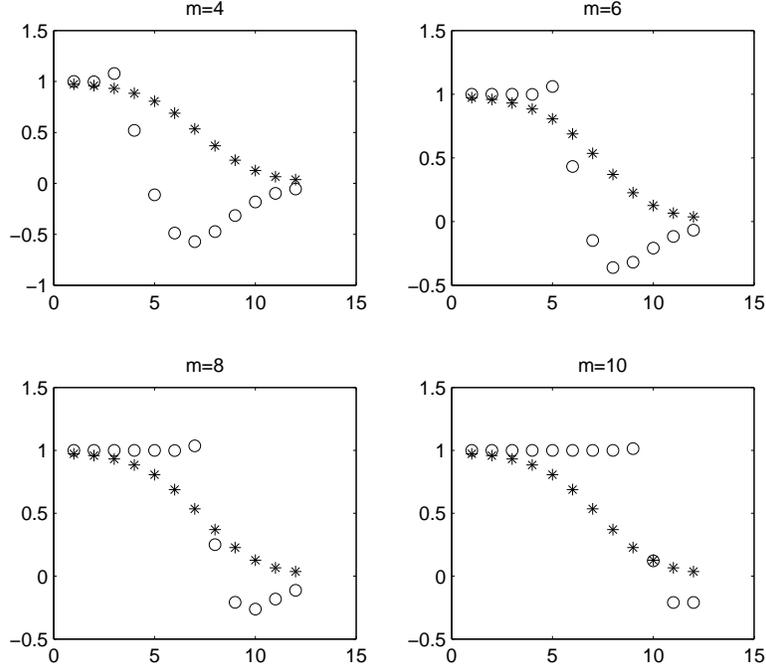}} \vspace{-0.4cm}
\caption{Filter factors $g_{i}$ (asterisk) and $f_{i}^{(m)}$ (circle) with $%
m=4,6,8,10$, for GRAVITY(12).}
\label{figure0}
\end{figure}

While the problem is rather simple the pictures clearly represent the action
of the Arnoldi (Lanczos in this case) steps. Since the Arnoldi (Lanczos)
algorithm initially picks up the largest eigenvalues, it automatically
corrects the filters corresponding to the low-middle frequencies ($%
g_{i}\rightarrow f_{i}^{(m)}\approx 1$), keep damping the highest ones. The
second phase thus performs a correction, but the properties of the Arnoldi
algorithm guarantees that the method can still be interpreted as a
regularizing approach.

For a better explanation of Figure \ref{figure0}, let us assume for
simplicity that the Ritz values $r_{j}$, $j=1,...,m$, are distinct (as in
the example), so that we can write
\begin{equation*}
p_{m-1}(\lambda _{i})=\sum_{j=1}^{m}l_{j}(\lambda _{i})f(r_{j}),
\end{equation*}%
where $l_{j}$, $j=1,...,m$ are the Lagrange polynomials. Hence we obtain%
\begin{equation*}
f_{i}^{(m)}=\sum_{j=1}^{m}l_{j}(\lambda _{i})\frac{\lambda _{i}}{r_{j}}\frac{%
r_{j}+\lambda }{\lambda _{i}+\lambda },\quad i=1,...,N\text{.}
\end{equation*}%
Since the Arnoldi algorithm ensures that $r_{j}\approx \lambda _{j}$ for $%
j=1,...,m$ we have $f_{i}^{(m)}\approx 1$ for $i\leq m$. For $i>m$ and when $%
\lambda _{i}\approx 0$ we have that%
\begin{equation*}
f_{i}^{(m)}\approx p_{m-1}(0)\frac{\lambda _{i}}{\lambda _{i}+\lambda },
\end{equation*}
so that the filters are close to the ones of the uncorrected scheme. Of
course, numerically, the problems start to appear when the Arnoldi algorithm
fails to provide good approximations of the eigenvalues of $A$, but it is
important to observe that, at least in exact arithmetics, the choice of $%
\lambda $ only influences the high frequencies. For this reason, at least
for the ASP method, this choice is more related to the conditioning of the
subproblems (cf. Section \ref{sec3}).

The filter factor analysis just presented remains valid also for the ATP
method. Taking $H=I$ in (\ref{tt}) and using the SVD decomposition we easily
find that the filter factors are now given by%
\begin{equation*}
f_{i}^{(m)}=\frac{\sigma _{i}^{2}\, p_{m-1}(\sigma _{i}^{2})}{\sigma
_{i}^{2}+\lambda }
\end{equation*}%
and hence our considerations for the ASP method remains true also for this
case. Of course for $H\neq I$ we just need to consider the GSVD. For
problems with noise, the choice of $\lambda $ is of great importance. Anyway
we have just seen that the correction phase allows to reproduce the low
frequencies independently of this choice. In this sense, in practice we can
take $\lambda $ even very large in order to reduce as much as possible the
influence of noise.

\section{Numerical experiments}

\label{sec5} This section is devoted to the numerical experiments obtained
on a single processor computer Intel Core Duo T5800 with Matlab 7.9. Our
goal is to prove numerically what we consider the valuable properties of the
ASP and the ATP methods, that is, accuracy and speed comparable with the
most effective iterative solvers, stability, and reduced dependence on the
parameter $\lambda $. For the experiments we consider problems taken from
the \texttt{Regularization Tools Matlab} package by Hansen \cite{H1,H2}. Our
comparison method is the {Matlab} version of the GMRES, that is implemented
with the Householder algorithm that guarantees the orthogonality of the
Krylov basis at the machine precision. For the problems here considered the
GMRES method has shown to be the most accurate, if compared to other well
known methods that we can found in the literature. Since it is also quite
unstable, it is generally implemented together with the discrepancy
principle as stopping criterium (where it is possible of course), but not
always with good results. We point out that the modified Gram-Schmidt
version of the GMRES has also been considered in the experiments (even if
not reported); this version is stable, but unfortunately the attainable
accuracy loses one or even two order of magnitude with respect to the
version implemented by {Matlab}. Other methods such as the CGLS and LSQR are
widely inferior for the problems here considered.

\begin{figure}[h]
\centerline{\includegraphics[width=12cm]{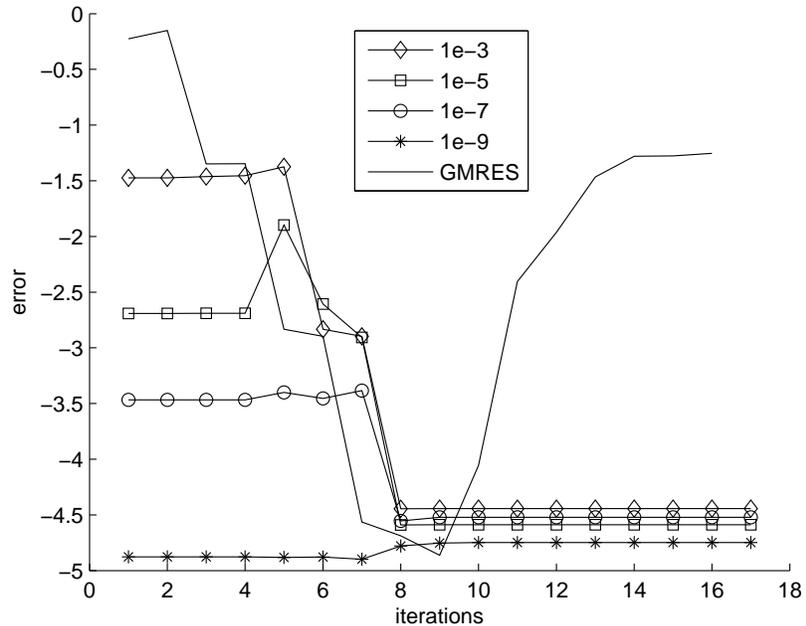}} \vspace{-0.4cm}
\caption{Error behavior of the GMRES and the ASP method with $\protect%
\lambda =10^{-3}, 10^{-5}, 10^{-7}, 10^{-9}$, for noise-free BAART(240).}
\label{figure1}
\end{figure}

In all experiments the Arnoldi algorithm for the ASP and the ATP methods, as
said in Section \ref{sec2}, is implemented with the modified Gram-Schmidt
orthogonalization, and the initial linear system is solved with the LU or
the Cholesky factorization.

\begin{table}[h]
\begin{center}
\begin{tabular}{lccc}
\hline
& error & residual & $\lambda $ \\ \hline
ASP & \multicolumn{1}{l}{$3.58 \times 10^{-5} (8)$} & \multicolumn{1}{l}{$%
1.89\times 10^{-12}$} & \multicolumn{1}{l}{$10^{-3}$} \\
& \multicolumn{1}{l}{$2.57\times 10^{-5} (8)$} & \multicolumn{1}{l}{$%
3.86\times 10^{-13}$} & \multicolumn{1}{l}{$10^{-5}$} \\
& \multicolumn{1}{l}{$2.78\times 10^{-5} (8)$} & \multicolumn{1}{l}{$%
4.79\times 10^{-14}$} & \multicolumn{1}{l}{$10^{-7}$} \\
& \multicolumn{1}{l}{$1.26\times 10^{-5} (7)$} & \multicolumn{1}{l}{$%
1.94\times 10^{-12}$} & \multicolumn{1}{l}{$10^{-9}$} \\ \hline
GMRES & \multicolumn{1}{l}{$1.37\times 10^{-5} (9)$} & \multicolumn{1}{l}{$%
2.21\times 10^{-15}$} & \multicolumn{1}{l}{}%
\end{tabular}%
\end{center}
\par
\vspace{-0.4cm}
\caption{Results for BAART(240) in the noise-free case. }
\label{table1}
\end{table}

\begin{figure}[h]
\centerline{\includegraphics[width=12cm]{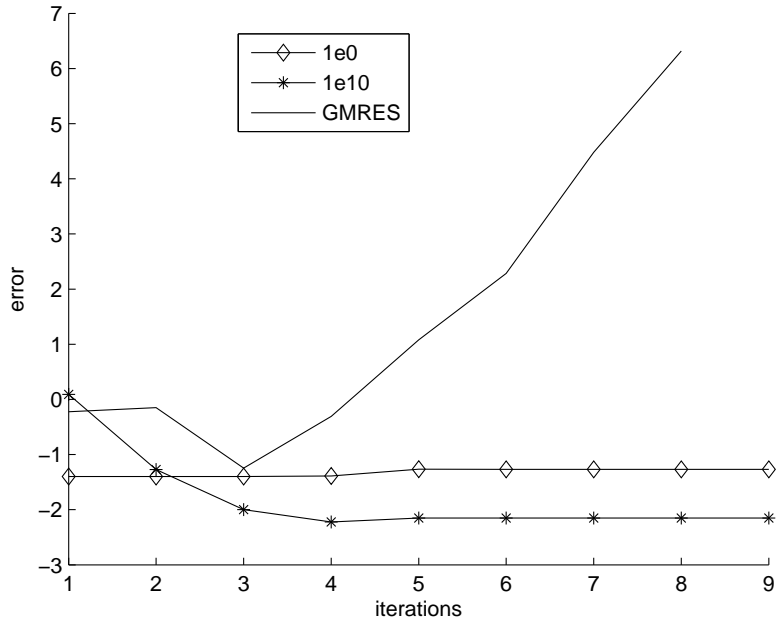}} \vspace{-0.4cm}
\caption{Error behavior of the GMRES and the ATP method with $\protect%
\lambda =1$ and $\protect\lambda =10^{10}$ for BAART(240) with Gaussian
noise.}
\label{figure2}
\end{figure}

\begin{table}[h]
\begin{center}
\begin{tabular}{lccc}
\hline
& error & residual & $\lambda $ \\ \hline
ATP & \multicolumn{1}{l}{$4.00 \times 10^{-2} (2)$} & \multicolumn{1}{l}{$%
2.70 \times 10^{-4}$} & \multicolumn{1}{l}{$1$} \\
& \multicolumn{1}{l}{$6.01 \times 10^{-3} (4)$} & \multicolumn{1}{l}{$2.17
\times 10^{-4}$} & \multicolumn{1}{l}{$10^{10}$} \\ \hline
GMRES & \multicolumn{1}{l}{$5.66 \times 10^{-2} (3)$} & \multicolumn{1}{l}{$%
2.16 \times 10^{-4}$} & \multicolumn{1}{l}{}%
\end{tabular}%
\end{center}
\caption{Results for BAART(240) with Gaussian noise. }
\label{table2}
\end{table}

As first test problem we consider BAART(240) (in parenthesis, as usual, we
indicate the dimension $N $). The estimated condition number of the
corresponding matrix $A$ is around $10^{20}$. We first consider the
noise-free case comparing the behavior of the ASP method with GMRES, taking
different values of the parameter $\lambda $. Looking at Figure \ref{figure1}
we can observe that even considering a wide range of values for $\lambda $,
contrary to GMRES the ASP method does not suffer from semi-convergence, that
is, the error always stabilizes around the minimum. The attainable accuracy
is always quite close to the one of GMRES. The number of iterations
necessary to achieve the minimum accuracy is almost always the same, as
expected from Proposition \ref{pg} and it depends on the spectral properties
of the operator, that is, on the fast decay of $\prod%
\nolimits_{i=1}^{m}h_{i+1,i}$ (cf. Theorem \ref{nev}).

Another important observation can be made looking at the error curve
corresponding to the choice of $\lambda =10^{-9}$ (line with asterisks).
Since this curve is almost flat we argue that this value of $\lambda $ is
probably very close to the value $\lambda _{\mathrm{opt}}$ defined by (\ref%
{lo}), that seeks for a compromise between the accuracies in the solutions
of the initial linear system and in the computation of the matrix function.
In Table \ref{table1} the minimal errors (with the iteration numbers in
parenthesis) and the corresponding residuals are reported.

Now we consider the same problem with right-hand side affected by noise. We
try to solve $A{\mathbf{x}}={\mathbf{b}}$ working with an inexact right-hand
side $\overline{{\mathbf{b}}}={\mathbf{b}}+{\mathbf{e}}_{{\mathbf{b}}}$
where ${\mathbf{e}}_{{\mathbf{b}}}$ is a noise vector of the type%
\begin{equation}
{\mathbf{e}}_{{\mathbf{b}}}=\frac{\delta \left\Vert {\mathbf{b}}\right\Vert
}{\sqrt{N}}\; {\mathbf{u}},  \label{nv}
\end{equation}%
where we define $\delta =10^{-3}$ as the relative noise level, and ${\mathbf{%
u}}=\mathtt{randn(N,1)}$, that in {Matlab} notation is a vector of $N$
random components with normal distribution with mean $0$ and standard
deviation $1$. For the ATP method, we define $H$ as the discrete second
derivative operator, that is,
\begin{equation*}
H=\left(
\begin{array}{cccc}
2 & -1 &  &  \\
-1 & \ddots & \ddots &  \\
& \ddots & \ddots & -1 \\
&  & -1 & 2%
\end{array}%
\right) \in \mathbb{R}^{N\times N},
\end{equation*}%
and we choose $\lambda =1$ and $\lambda =10^{10}$. The comparison is made
again with the GMRES. The error curves are plotted in Figure \ref{figure2}.
For $\lambda =1$ the method does not provide a substantial improvement to
the first iteration that correspond to the Cholesky solution of the Tikhonov
system. Probably this is due to the fact that $\lambda =1$ is close to the
value attainable with the L-curve analysis. Anyway it is important to notice
that the method does not deteriorate that approximation during the
iteration. For $\lambda =10^{10}$ we have an effective and stable
improvement with a good accuracy if compared with the one of the GMRES. In
order to avoid confusion in the pictures we only consider these two values,
since in the internal range the curves are similar, showing the robustness
of the method with respect to the choice of the parameter $\lambda $. The
results are reported in Table \ref{table2}.

For a fair comparison between the ASP method and GMRES we also consider the
preconditioned version of this code that we denote by PGMRES with the same
preconditioner used by the ASP method, that is, $A+\lambda I$. Working again
with BAART(240) with exact right-hand side, in Figure \ref{figure3} we plot
the error curves with respect to the computational cost. While a flops
counter is no longer available in Matlab, it is quite easy to derive these
numbers knowing the algorithms. The non-vectorial operations are neglected.
For both methods the systems with $A+\lambda I$\ are solved by means of the
LU factorization, computed only once at the beginning. Of course, each
PGMRES iteration is more expensive since it requires the solution of a
system with $A+\lambda I$.

\begin{figure}[h]
\centerline{\includegraphics[width=16cm]{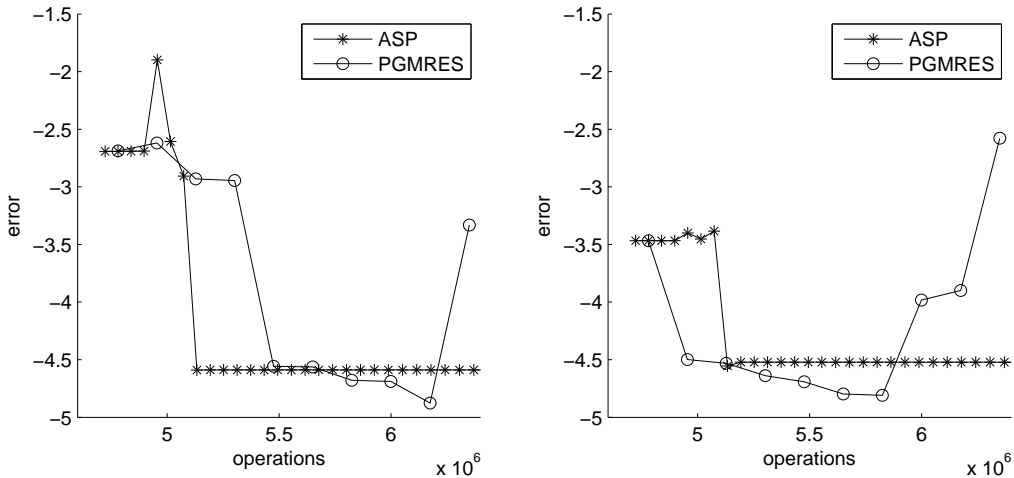}}
\caption{Error behavior of the preconditioned GMRES and the ASP method for
BAART(240) with $\protect\lambda =10^{-5}$ (left) and $10^{-7}$ (right).}
\label{figure3}
\end{figure}

\vspace{0.4cm}
\begin{table}[h]
\begin{center}
\begin{tabular}{lccc}
\hline
& error & residual & $\lambda $ \\ \hline
ASP & \multicolumn{1}{l}{$2.57 \times 10^{-5} (8)$} & \multicolumn{1}{l}{$%
3.86 \times 10^{-13}$} & \multicolumn{1}{l}{$10^{-5}$} \\
PGMRES & \multicolumn{1}{l}{$1.33 \times 10^{-5} (9)$} & \multicolumn{1}{l}{$%
3.92 \times 10^{-15}$} & \multicolumn{1}{l}{$10^{-5}$} \\ \hline
ASP & \multicolumn{1}{l}{$2.78 \times 10^{-5} (8)$} & \multicolumn{1}{l}{$%
4.79 \times 10^{-14}$} & \multicolumn{1}{l}{$10^{-7}$} \\
PGMRES & \multicolumn{1}{l}{$1.55 \times 10^{-5} (7)$} & \multicolumn{1}{l}{$%
3.43 \times 10^{-13}$} & \multicolumn{1}{l}{$10^{-7}$}%
\end{tabular}%
\end{center}
\par
\vspace{-0.4cm}
\caption{Comparison between the ASP method and the PGMRES for $\protect%
\lambda =10^{-5},10^{-7}$. }
\label{table3}
\end{table}
The results reported in Figure \ref{figure3} reveal that the ASP is still
competitive with the PGMRES in terms of accuracy and computational cost. For
this example the PGMRES is a bit faster than GMRES (cf. Figure \ref{figure1}%
) since the error curve is steeper at the beginning, but it remains
unstable. Comparing also the results of these examples (Table \ref{table3})
with the ones reported in Table \ref{table1}, we also observe a very little
improvement in terms of accuracy.

\begin{figure}[h]
\centerline{\includegraphics[width=13cm]{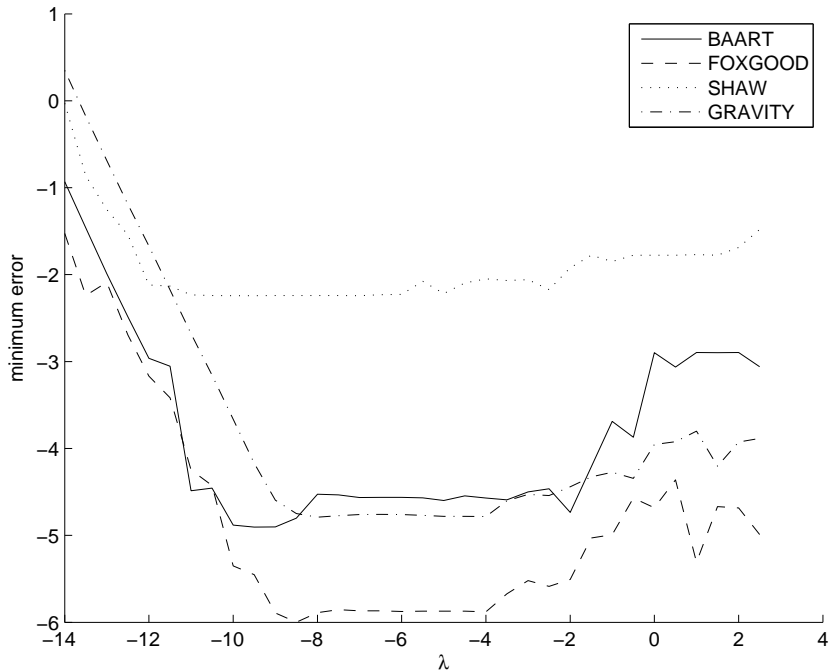}} \vspace{-0.4cm}
\caption{Maximum attainable accuracy with respect to the choice of $\protect%
\lambda$. The dimension of each problem is $N=160$.}
\label{figure4}
\end{figure}

In a final example we want show the behavior of the methods in four
classical problems (BAART, FOXGOOD, SHAW and GRAVITY), with $N=160$,
changing the value of the parameter $\lambda $. Figure \ref{figure4} is
representative of what happen in general for the ASP method with exact
right-hand side, that is, as expected, the attainable accuracy is generally
poor for small values of $\lambda $ (the initial system is badly solved) and
for large values of $\lambda $ (the preconditioning effort is poor). In any
case it is really important to observe that the maximum accuracy can be
obtained without much differences for a relatively large window of values
for $\lambda $, since the curves exhibit a plateau around the minimum.
Indicatively, we may say that the maximum accuracy can be achieved taking $%
\lambda $ in a range between $1/\sqrt{\kappa (A)}$ and $1/\sqrt[4]{\kappa (A)%
}$. The importance of this behavior is not negligible because it means that
having an estimate of the conditioning of $A$ allows to skip any
pre-processing techniques to estimate the optimal value of $\lambda $.

\begin{figure}[h]
\centerline{\includegraphics[width=13cm]{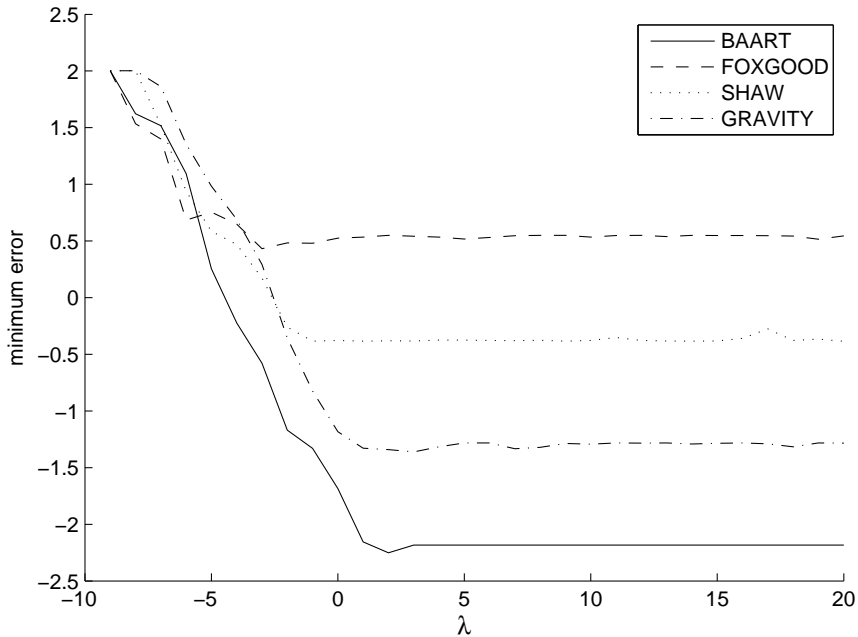}} \vspace{-0.4cm}
\caption{Maximum attainable accuracy with respect to the choice of lambda
with right-hand side affected by noise. The dimension of each problem is $%
N=160$.}
\label{figure5}
\end{figure}

Assume now to work with a right-hand side affected by noise, $\overline{{%
\mathbf{b}}}={\mathbf{b}}+{\mathbf{e}}_{{\mathbf{b}}}$, where ${\mathbf{e}}_{%
{\mathbf{b}}}$ is a defined by (\ref{nv}) with noise level $\delta =10^{-3}$%
. Looking at Figure \ref{figure5}, we can observe that with respect to the
noise-free case we do not even have the problem of oversmoothing taking $%
\lambda $ too large, at least for the example considered. We argue that the
bottleneck, for what concerns the accuracy, is represented by the effect of
noise. In general, increasing the value of $\lambda $ leads to a slight
increase of the number of iterations. These considerations leads us to state
a general strategy for an automatic parameter-choice implementation of the
method.

\begin{enumerate}
\item define $\lambda $ relatively ``large'', for instance even much larger
than the point of maximum curvature of the L-curve;

\item use any parameter-choice method for $m$ to define the stopping rule
(as for instance the discrepancy principle where possible), allowing some
more iterations to avoid oversmoothing ($m$ too small, cf. Figure \ref%
{figure2}).
\end{enumerate}

Concluding we may say that for the ATP method of course there exists an
optimal value of $\lambda $, say $\lambda _{\mathrm{opt}}$, close to the
corners of the L-shaped curves of Figure \ref{figure5}, and a corresponding $%
m_{\mathrm{opt}}$, that is, the minimum number of iterations to achieve the
optimal accuracy. Anyway, our experiments reveal that working with $\lambda
>\lambda _{\mathrm{opt}}$ and $m>m_{\mathrm{opt}}$, we do not have a
sensible loss of accuracy nor a remarkable increase of computational cost.

\section{An example of image restoration}

\label{sec6} In this section we consider a problem of image restoration. The
example is a 2D image deblurring problem which consists of recovering the
original $n\times n$ image from a blurred and noisy image. The original
image is denoted by $X$ and it consists of $n\times n$ grayscale pixel
values. Let ${\mathbf{x}}=vec\left( X\right) \in \mathbb{R}^{N}$, $N=n^{2}$,
be the vector whose entries are the pixel values of the image $X$. Let
moreover $A\in \mathbb{R}^{N\times N}$ be the matrix representing the
blurring operator, coming from the discretization of the Point Spread
Function (PSF). The vector ${\mathbf{b}}=A{\mathbf{x}}$ represents the
associated blurred and noise-free image. We generate a blurred and noisy
image $\overline{{\mathbf{b}}}={\mathbf{b}}+{\mathbf{e}}_{{\mathbf{b}}}$,
where ${\mathbf{e}}_{{\mathbf{b}}}$ is a noise vector defined by (\ref{nv})$%
\ $with $\delta =10^{-3}$.

The matrix $A$ is a symmetric Toeplitz matrix given by
\begin{equation*}
A=(2\pi \sigma ^{2})^{-1}T\otimes T
\end{equation*}%
where $T$ is a $n\times n$ symmetric banded Toeplitz matrix whose first row
is a vector ${\mathbf{v}}$ whose element are%
\begin{equation*}
v_{j}:=\left\{
\begin{array}{cl}
\displaystyle\frac{e^{-(j-1)^{2}}}{2\sigma ^{2}} & \text{for }j=1,...,q \\
0 & \text{for }j=q+1,...,n%
\end{array}%
\right.
\end{equation*}%
The parameter $q$ is the half-bandwidth of the matrix $T$, and the parameter
$\sigma $ controls the width of the underlying Gaussian point spread function

\begin{equation*}
h(x,y)=\frac{1}{2\pi \sigma ^{2}}\exp \left( -\frac{x^{2}+y^{2}}{2\sigma ^{2}%
}\right) ,
\end{equation*}%
which models the degradation of the image. Thus, a larger $\sigma $ implies
a wider Gaussian and thus a more ill-posed problem. For our experiments $X$
is a $100\times 100$ subimage of the image \texttt{coins.png} from Matlab's
Image Processing Toolbox, shown as the first image in Figure \ref{figure6}.
We define $q=6$ and $\sigma =1.5$, so that the condition number of $A$ is
around $10^{10}$. We report the results of our image restoration using two
different\ regularization operators. In particular we consider the matrix%
\begin{equation*}
H_{1,2}=\left(
\begin{array}{c}
I\otimes H_{1} \\
H_{1}\otimes I%
\end{array}%
\right) ,\text{ where }H_{1}=\left(
\begin{array}{cccc}
1 & -1 &  &  \\
& \ddots & \ddots &  \\
&  & 1 & -1 \\
&  &  & 1%
\end{array}%
\right) \in \mathbb{R}^{n\times n},
\end{equation*}%
taken from \cite{KHE} (with a slight modification in order that $%
H_{1,2}^{T}H_{1,2}$ is nonsingular), and the matrix $H_{2,2}$ defined as the
discretization of the two-dimensional Laplace operator with zero-Dirichlet
boundary conditions, that is,%
\begin{equation*}
H_{2,2}=\left(
\begin{array}{ccccc}
4 & -1 &  & -1 &  \\
-1 & 4 & -1 &  & -1 \\
& \ddots & \ddots & \ddots &  \\
-1 &  & -1 & 4 & -1 \\
& -1 &  & -1 & 4%
\end{array}%
\right) \in \mathbb{R}^{N\times N}.
\end{equation*}

\begin{figure}[h]
\centerline{\includegraphics[width=15cm]{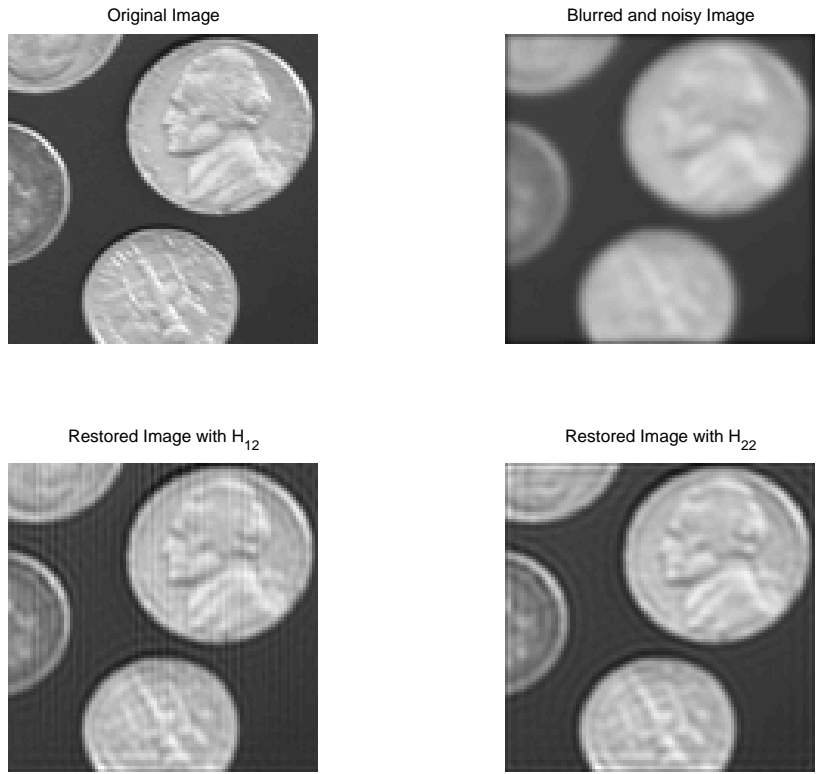}} \vspace{-1.2cm}
\caption{Image restoration with the ATP method using $H_{1,2}$, $H_{2,2}$
and $\protect\lambda =100$.}
\label{figure6}
\end{figure}

Figure \ref{figure6} shows that the ATP method can be fruitfully used also
for these kind of problems. Due to the well marked edges, the original image
involves high-frequencies so that the restoration by means of the standard
derivative operators is intrinsically difficult, because they are high-pass
filters.

Table \ref{table4} shows that also for this kind of problems the attainable
accuracy is weakly influenced by the choice of $\lambda $.

\vspace{0.4cm}
\begin{table}[h]
\begin{center}
\begin{tabular}{lcccc}
\hline
& $1$ & $10^{2}$ & $10^{4}$ & $10^{6}$ \\ \hline
$H_{1,2}$ & \multicolumn{1}{l}{0.060} & \multicolumn{1}{l}{0.060} &
\multicolumn{1}{l}{0.062} & \multicolumn{1}{l}{0.059} \\
$H_{2,2}$ & \multicolumn{1}{l}{0.061} & \multicolumn{1}{l}{0.064} &
\multicolumn{1}{l}{0.069} & \multicolumn{1}{l}{0.075}%
\end{tabular}%
\end{center}
\par
\vspace{-0.4cm}
\caption{Attainable accuracy (Euclidean norm of the error) for the image
restoration with $H_{1,2}$ and $H_{2,2}$ using different values of $\protect%
\lambda$. }
\label{table4}
\end{table}

\section{Conclusions}

\label{sec7} In this paper we have presented a new approach for the solution
of discrete ill-posed problems. The basic idea is to solve the problem in
two steps: first regularize and then reconstruct. We have described two
methods based on this idea, the ASP method that is actually a particular
preconditioned iterative solver, and the ATP method that is a method that
tries to improve the approximation arising from the Tikhonov regularization.
In both cases the reconstruction is performed evaluating a matrix function
by means of the standard Arnoldi method. This idea can also be interpreted
as a modification of the iterated Tikhonov regularization (see for instance
\cite{HH} and \cite{Neu}).

Being iterative, both methods should be interpreted as methods depending on
two parameters, that is, $\lambda $ and the number of iterations $m$.
Actually our implementation of the Arnoldi method (Gram-Schmidt) is very
stable so that for a fixed $\lambda $, the undersmoothing effect,
theoretically determined by taking $m$ large, in general does not
deteriorate the approximation. Therefore the only important parameter is $%
\lambda $. Anyway, the most important property of both methods is that they
do not need an accurate estimate of this parameter to work properly (cf.
Section \ref{sec4}, Figures \ref{figure4} and \ref{figure5}, and Table 4).
Of course this property is particularly attractive for problems in which a
parameter-choice analysis is too expensive or even unfeasible as for
instance for large scale problems such as the image restoration.

As possible future developments, we observe that the ASP method could be
quite easily extended to work in connection with polynomial preconditioners
(see e.g. \cite{Bro} for a background). This can be done replacing $%
(A+\lambda I)^{-1}$ with a suitable $p_{m}(A)\approx A^{-1}$ and changing
accordingly the matrix function to evaluate. Also the symmetric version of
the ATP method (see Remark \ref{r1}) seems quite interesting and requires
further investigation.

Finally, we want to point out that the present paper was just intended to
present the basic ideas and properties of the methods; in this sense, a
reliable implementation with stopping criterium, choice of $\lambda $, etc.,
has still to be done.


\begin{thebibliography}{99}

\bibitem{BR} B. Beckermann and L. Reichel, \emph{Error estimation and
evaluation of matrix functions via the Faber transform}, SIAM J. Numer.
Anal., 47(5) (2009), pp. 3849-3883.

\bibitem{Bro} C. Broyden, M.T. Vespucci, \emph{Krylov solvers for linear
algebraic systems}. Studies in Computational Mathematics, 11. Elsevier B.
V., Amsterdam, 2004.

\bibitem{BSS} M. Bellalij, Y. Saad, H. Sadok, \emph{Further analysis of the
Arnoldi process for eigenvalue problems}, SIAM J. Numer. Anal. 48 (2010),
no. 2, 393--407

\bibitem{BGL} M. Benzi, G.H. Golub, J. Liesen, \emph{Numerical solution of
saddle point problems}, Acta Numer. 14 (2005), 1--137.

\bibitem{RDLS} C. Brezinski, P. Novati, M. Redivo-Zaglia, \emph{A rational
Arnoldi approach for ill-conditioned linear systems}, J. Comput. Appl. Math
(2011), doi:10.1016/j.cam.2011.09.032.

\bibitem{Cro} M. Crouzeix, \emph{Numerical range and numerical calculus in
Hilbert space}\textit{. }J. Functional Analysis, 244 (2007), pp. 668-690.

\bibitem{Deb} C. de Boor, \emph{Divided differences}, Surveys in
approximation theory 1 (2005), pp. 46--69.

\bibitem{DM} F. Diele, I. Moret, S. Ragni, \emph{Error estimates for
polynomial Krylov approximations to matrix functions}, SIAM J. Matrix
Analysis and Appl. 30, (2008), pp. 1546--1565.

\bibitem{Eier} M. Eiermann, O.G. Ernst, \emph{A restarted Krylov subspace
method for the evaluation of matrix functions}, SIAM J. Numer. Anal. 44
(2006), pp. 2481--2504.

\bibitem{Fas} G. Fasshauer, \emph{Meshfree Approximation Methods with MATLAB}%
, World Scientific Publishers, Singapore, 2007.

\bibitem{F} J.N. Franklin, \emph{Minimum principles for ill-posed problems},
SIAM J. Math. Anal. 9 (1978), pp. 638-650.

\bibitem{PCH} P.C. Hansen, \emph{Rank-Deficient and Discrete Ill-Posed
Problems: Numerical Aspects of Linear Inversion}, SIAM, Philadelphia, 1998.

\bibitem{H1} P.C. Hansen, \emph{Regularization Tools: A Matlab package for
analysis and solution of discrete ill-posed problems}, Numerical Algorithms,
6 (1994), pp. 1-35.

\bibitem{H2} P.C. Hansen, \emph{Regularization Tools Version 4.0 for Matlab
7.3}, Numerical Algorithms, 46 (2007), pp. 189-194.

\bibitem{HH} M. Hanke, P.C. Hansen, \emph{Regularization methods for
large-scale problems}. Surveys Math. Indust. 3 (1993), no. 4, 253--315.

\bibitem{H} N.J. Higham, \emph{Functions of Matrices: Theory and Computation}%
, SIAM, 2008.

\bibitem{Hof} B. Hofman, \emph{Regularization of nonlinear problems and the
degree of ill-posedness}, in G. Anger, R. Gorenflo, H. Jochmann, H. Moritz
and W. Webers (Eds.), Inverse Problems: Principle and Applications in
Geophysics, Technology and Medicine, Akademie Verlag, Berlin, 1993.

\bibitem{KHE} M.E. Kilmer, P.C. Hansen, M.I. Espa\~{n}ol, \emph{A
projection-based approach to general-form Tikhonov regularization}, SIAM J.
Sci. Comput. 29 (2007), no. 1, 315--330.

\bibitem{MN1} I. Moret, P. Novati, \emph{An interpolatory approximation of
the matrix exponential based on Faber polynomials}, Journal C.A.M. 131
(2001), pp. 361--380.

\bibitem{Morno} I. Moret and P. Novati, \emph{RD-rational approximations of
the matrix exponential}\textit{,} BIT, 44(3) (2004), pp. 595-615.

\bibitem{Ne} O. Nevanlinna, \emph{Convergence of Iterations for Linear
Equations}, Birkh\"{a}user, Basel, 1993.

\bibitem{Neu} A. Neumaier, \emph{Solving ill-conditioned and singular linear
systems: a tutorial on regularization}. SIAM Rev. 40 (1998), no. 3, 636--666.

\bibitem{Trefe} L.N. Trefethen, D. Bau , \emph{Numerical Linear Algebra},
SIAM, Philadelphia, 1997.
\end{thebibliography}
\end{document}